\def\R{{\mathbb R}}
\newtheorem{theorem}{Theorem}   
\newtheorem{prop}[theorem]{Proposition} 
\newtheorem{lemma}[theorem]{Lemma}
\newtheorem{corr}[theorem]{Corollary}
\newtheorem{definition}[theorem]{Definition}
\newenvironment{proof}{\medskip \par \noindent {\bf Proof}\ }{\hfill 
$\Box$ 
                       \medskip \par}
\def\R{{\mathbb R}}
\begin{document}
\overfullrule=0pt
\baselineskip=18pt
\baselineskip=24pt
\font\tfont= cmbx10 scaled \magstep3
\font\sfont= cmbx10 scaled \magstep2
\font\afont= cmcsc10 scaled \magstep2
\title{\tfont Functional Analysis behind a Family of Multidimensional Continued Fractions:\\Part I}
\bigskip
\author{Ilya Amburg\footnote{Center for Applied Mathematics, Cornell University, Ithaca, NY 14853; \texttt{ia244@cornell.edu}}\mbox{  } and Thomas Garrity\footnote{Williams College Department of Mathematics, Williamstown, MA  01267; \texttt{tgarrity@williams.edu}}}

\date{}
\maketitle

\begin{abstract}

Triangle partition maps form a family that includes many, if not most, well-known multidimensional continued fraction algorithms.  This paper begins the exploration of the functional analysis behind the transfer operator of each of these maps. 
We show that triangle partition maps give rise to two classes of transfer operators and present theorems regarding the origin of these classes; we also present related theorems on the form of transfer operators arising from compositions of triangle partition maps. In the next paper,  Part II, we will find eigenfunctions of eigenvalue 1 for transfer operators associated with select triangle partition maps on specified Banach spaces and  then  proceed to prove that the transfer operators, viewed as acting on one-dimensional families of Hilbert spaces, associated with select triangle partition maps are nuclear of trace class zero. We will finish in part II  by deriving Gauss-Kuzmin distributions associated with select triangle partition maps.

\end{abstract}

\section{Introduction}

Using continued fractions to understand the structure of real numbers has been going on for many centuries.  One of the most interesting properties of continued fractions goes back to work of Lagrange in 1789, who showed that a real number has an eventually periodic continued fraction expansion if and only if the number is a quadratic irrational.  This is in analog to the fact that a real number  has an eventually periodic decimal expansion if and only if the number is rational.  As most numbers are neither rational nor quadratics, this leads to the following first of a whole slew of open questions:

\bigskip

\noindent {\bf The Hermite Problem:} Find methods for writing numbers that reflect special algebraic properties.

\bigskip
Even for cubic irrationals, this problem is still quite open.  Attempts to solve it are usually called multidimensional continued fractions.  There are many, seemingly too many. (Multidimensional continued fractions have many other uses.)

Almost all multidimensional continued fraction algorithms associate to a point $(\alpha, \beta)$ in a triangle a  sequence of integers.  For most, eventual periodicity of this sequence means that the $\alpha$ and $\beta$ are in a number field of degree at most three.  What is difficult is the converse.

Most multidimensional continued fraction algorithms can be interpreted as iterative systems on a triangle or, in higher dimensions, on an appropriate simplex.  For a review of many types of multidimensional continued fraction algorithms, see Schweiger's {\it Multidimensional Continued Fractions} \cite{contfrac} and Karpenkov's {\it Geometry of Continued Fractions} \cite{Karpenkov}.  Thinking of these as  iterative systems, it is natural to investigate them as dynamical systems, which in turn will lead to the study of transfer operators (for background on transfer operators, see Baladi  \cite{Baladi00}).  This has long been done for traditional continued fractions, using the Gauss map acting on the unit interval.  For background on this, see  Hensley\cite{ Hensley}, Iosifescu and Kraaikamp \cite{Iosifescu-Kraaikamp1}, Kesseb\"{o}hmer, Munday and Stratmann  \cite{Kessebohmer-Munday-Stratmann},  Khinchin \cite{Khinchin}, Rockett and Szusz \cite{Rockett-Szusz} and Schweiger \cite{Schweiger3}. In \cite{Garrity15}, this was done for the triangle map, where it was shown that much of the work of Mayer and Roepstorff \cite{Mayer, Mayer-Roepstorff1988} has nontrivial analogs for the triangle map. Building on an earlier version of \cite{Garrity15} , there is also the interesting work of Bonanno, Del Vigna and Munday \cite{Bonanno-Del Vigna-Munday}.  Further, there is the work of Fourgeron and Skripchenko \cite{Fougeron-Skripchenko} on the Lyapunov exponents of the triangle map.   The goal of this paper is to see which of these analogs hold for various members of the family of triangle partition maps.

Triangle partition maps \cite{SMALLTRIP, Cubic, Karpenkov} are a family of $216$ multidimensional continued fraction algorithms that include (when combinations are allowed) many, if not most, well-known multidimensional continued fraction algorithms, which is why they are a natural class of algorithms to study.  These maps are reviewed in section \ref{background}.  

In section \ref{transferoperators}, we will start to look at transfer operators associated with triangle partition maps.  For the Gauss map, we know that 
the transfer operator is 
$$\mathcal{L}(f)(x) = \sum_{k=1}^{\infty} \frac{1}{(k+x)^2} f\left( \frac{1}{(k+x)}   \right),$$
and for the triangle map, the transfer operator is the somewhat similar looking,
$$\mathcal{L}(f)(x,y) = \sum_{k=0}^{\infty} \frac{1}{(1+kx + y)^3} f\left( \frac{1}{(1 + k x + y) } , \frac{x}{(1 + k x + y) }  \right).$$

In section \ref{poly}, 
we will show  for precisely half of the triangle partition maps that the transfer operator will have the form
$$\sum_{k=0}^{\infty} \frac{1}{m(k,x,y)^3} f\left(\cdot, \cdot \right)$$
where $m(k,x,y)$ will be a polynomial linear in $k$ (with the possibility of a $(-1)^k$ term) and linear in $x$ and $y$.  We will say that such transfer operators have {\it polynomial behavior}.  We will then show  for the remaining half of triangle partition maps
that their transfer operators have drastically different behavior. We will see that their transfer operators are of the general form
$$\sum_{k=0}^{\infty} \frac{1}{e(k,x,y)^3} f\left(\cdot, \cdot \right)$$
where $e(k,x,y)$ is exponential in $k$.  Such triangle partition algorithms are naturally enough said to have {\it non-polynomial behavior}.  It is here where this paper (Part I) ends.

But this paper sets up, more importantly,  the needed machinery for the next paper ``Functional Analysis behind a Family of Multidimensional Continued Fractions: Part II''  \cite{Amburg-Garrity II} In this next paper we 
turn to the spectrum of the transfer operators.  Here we need to be concerned with which vector spaces of functions we are considering.  We will see in  section two of that paper that for some triangle partition maps there are natural Banach spaces of functions, that for $18$ of these  the largest eigenvalue of the associated transfer operator is one, and that for some of these  the dimension of the corresponding eigenspace is one.   Then we will see in section three that a number of the triangle partition maps' transfer operators are nuclear operator of trace class zero when viewed as acting on naturally occurring one-dimensional families of Hilbert spaces.  In section four we present the Gauss-Kuzmin statistics for select triangle partition maps. In the conclusion of this next paper  we discuss future directions for work.

We would like to thank the referees for their suggestions.

\section{Background on Triangle Partition Maps}\label{background}
As this is background, much of this section is similar to certain  sections in \cite{stern-trip, GarrityT05, SMALLTRIP, Cubic, Triangle, Karpenkov, Schweiger05}.

\subsection{The Triangle Map}

We start with the triangle map,  from which, in the next section,  the  216 triangle partition maps are constructed. For a further explanation of the triangle map, see \cite{Triangle, GarrityT05}.

Partition the triangle  
$$ \bigtriangleup = \{ (x,y): 1 \geq x \geq y \geq  0 \}$$
into subtriangles
$$\bigtriangleup_{k} = \{(x,y)\in\bigtriangleup:1-x-ky 
\geq 0 > 1-x-(k+1)y\},$$
for each non-negative integer $k$.  The partitioning is represented in the following diagram:

\begin{center}
\setlength{\unitlength}{.1 cm}
\begin{picture}(70,70)
\put(5,5){\line(1,0){60}}
\put(65,5){\line(0,1){60}}
\put(5,5){\line(1,1){60}}
\put(0,0){(0,0)}
\put(60,0){(1,0)}
\put(60,68){(1,1)}

\put(65,5){\line(-1,1){30}}
\put(65,5){\line(-2,1){40}}
\put(65,5){\line(-3,1){45}}
\put(65,5){\line(-4,1){48}}
\put(65,5){\line(-5,1){50}}

\put(5,40){$\triangle_{0}$}
\put(12,40){\vector(1,0){35}}
\put(5,27){$\triangle_{1}$}
\put(12,27){\vector(1,0){22}}
\put(5,22){$\triangle_{2}$}
\put(12,22){\vector(1,0){15}}
\put(5,18){$\triangle_{3}$}
\put(12,18){\vector(1,0){12}}
\put(5,14){$\triangle_{4}$}
\put(12,14){\vector(1,0){11}}
\end{picture}
\end{center}
Each subtriangle $\triangle_k$ has vertices $(1,0), (1/(k+1), 1/(k+1)), (1/(k+2), 1/(k+2)).$  The triangle map
$T:\triangle \rightarrow \triangle$ is defined by setting, for any $(x,y)\in \triangle_k$,
$$T(x,y) = \left(\frac{y}{x}, \frac{1 - x - k y}{x}\right).$$
This is an analog of the traditional Gauss map for continued fractions.

And as with the Gauss map and continued fractions, it is useful to translate this into matrices.
We have the projection map
$$\pi(x,y,z) = \left( \frac{y}{x}, \frac{z}{x} \right)$$
mapping rays in $\R^3$ to points in $\R^2$.  This allows us to associate to the triangle $\triangle $ in $\R^2$ the cone in $\R^3$ (which we will also denote by $\triangle$)
$$\triangle = \left\{  a \left( \begin{array}{c} 1\\0\\0   \end{array}  \right) + b \left( \begin{array}{c} 1\\1\\0   \end{array}  \right) +c\left( \begin{array}{c} 1\\1\\1   \end{array}  \right) :a,b,c>0 \right\}.$$
The cone $\triangle$ is the positive span of the column vectors of the matrix
$$V= (v_1, v_2, v_3) = \left( \begin{array}{ccc} 1&1&1\\0&1&1\\0  &0&1 \end{array}  \right).$$
Define matrices 

$$F_0 = \left(\begin{array}{ccc}
0 & 0 & 1 \\ 
1 & 0 & 0 \\
0 & 1 & 1 \\
\end{array}\right),
F_1 = \left(\begin{array}{ccc}

1 & 0 & 1 \\ 
0 & 1 & 0 \\
0 & 0 & 1 \\
\end{array}\right)$$
These matrices occur naturally since the cone corresponding to the subtriangles $\triangle_k$ is the positive span of the columns of $VF_1^{k-1}F_0$, which by an abuse of notation, we will often write as 
$$\triangle_k = VF_1^{k-1}F_0.$$

Then the triangle map $T: \triangle \rightarrow \triangle$ can be shown to be equal to:
 $$T(x, y) = \pi\left((1, x, y) \left(V F_0^{-1} F_1^{-k} V^{-1} \right)^{T}\right)$$
if $(x,y) \in \bigtriangleup_k.$

This enables us to associate the {\it triangle sequence} $(a_0, a_1, ... )$ to a point $(a,b) \in \bigtriangleup$ by letting $a_i$ equal $k$ if $T^i(x,y) \in \bigtriangleup_k$.  If  for any $k$ we 
have $T^{k}(a, b) \in \{ (x,0): 0 \leq 
 x\leq 1 \}$, the sequence terminates.

\subsection{ Triangle Partition Maps: TRIP maps }

The above triangle map depends on an initial choice of three vertices for $\triangle$.  If we permute these vertices both before and after applying the matrices $F_0$ and $F_1$, we create different multidimensional continued fraction algorithms.  This is at the heart of the definition for the 216 triangle partition maps. 

More precisely, we will allow permutations of the initial vertices by some $\sigma \in S_3,$ by some $\tau_1 \in S_3$ after applying $F_1,$ and by some $\tau_0 \in S_3$ after applying $F_0.$ 

This leads us to define the matrices
\begin{eqnarray*}
F_0(\sigma, \tau_0,  \tau_1) &=&  \sigma F_0 \tau_0,\\
F_1(\sigma, \tau_0,  \tau_1) &=& \sigma F_1 \tau_1
 \end{eqnarray*}
for every $(\sigma, \tau_0, \tau_1) \in S_3^3.$

In particular, we denote the permutation matrices as follows:
\bigskip

$e=\left(
\begin{array}{ccc}
 1 & 0 & 0 \\
 0 & 1 & 0 \\
 0 & 0 & 1 \\
\end{array}
\right),$
$(12)=\left(
\begin{array}{ccc}
 0 & 1 & 0 \\
 1 & 0 & 0 \\
 0 & 0 & 1 \\
\end{array}
\right),$
$(13)=\left(
\begin{array}{ccc}
 0 & 0 & 1 \\
 0 & 1 & 0 \\
 1 & 0 & 0 \\
\end{array}
\right),$\\

\bigskip
$(23)=\left(
\begin{array}{ccc}
 1 & 0 & 0 \\
 0 & 0 & 1 \\
 0 & 1 & 0 \\  
\end{array}
\right),$
$(123)=\left(
\begin{array}{ccc}
 0 & 1 & 0 \\
 0 & 0 & 1 \\
 1 & 0 & 0 \\
\end{array}
\right),$ and
$(132)=\left(
\begin{array}{ccc}
 0 & 0 & 1 \\
 1 & 0 & 0 \\
 0 & 1 & 0 \\
\end{array}
\right).$
\bigskip

Let $\bigtriangleup_k(\sigma, \tau_0,  \tau_1)$ be the image of $\bigtriangleup$ under the action of $F_1^k(\sigma, \tau_0,  \tau_1) F_0(\sigma, \tau_0,  \tau_1).$  Thus, with an abuse of notation, we have 
$$\bigtriangleup_k(\sigma, \tau_0,  \tau_1)= VF_1^k(\sigma, \tau_0,  \tau_1) F_0(\sigma, \tau_0,  \tau_1).$$
Then define a {\it triangle partition map} as  $T_{\sigma, \tau_0, \tau_1}: \bigcup_{k=0}^{\infty}\bigtriangleup_k(\sigma, \tau_0,  \tau_1) \to \bigtriangleup$ where 
$$T_{\sigma, \tau_0, \tau_1}(x, y) = \pi\left((1, x, y) \left(V F_0^{-1}(\sigma, \tau_0,  \tau_1) F_1^{-k}(\sigma, \tau_0,  \tau_1) V^{-1} \right)^{T}\right)$$
if $(x,y)\in\bigtriangleup_k(\sigma, \tau_0,  \tau_1).$

As $S_3$ has six elements, we have $6^3 = 216$ different triangle partition maps.
Of course, the original triangle map corresponds to $T_{e,e,e}.$  Appendix A in Part II gives the explicit forms of $T_{\sigma, \tau_0, \tau_1}$ for $108$ of the maps.  One of the goals of this paper is to explain why we do not list in an easy-to-read table the forms of $T_{\sigma, \tau_0, \tau_1} $ for the remaining $108$ maps.

We obtain an even larger family of maps by allowing compositions of triangle partition maps \cite{SMALLTRIP}. As an example, we might perform the first subdivision of $\bigtriangleup$ using $(\sigma, \tau_0, \tau_1),$ the second subdivision using $(\sigma_2, \tau_{02}, \tau_{12})$ and so on. We can represent such compositions as $T_1 \circ T_2 \ldots \circ T_n,$ where, of course, each subscript is short for a permutation triplet. 

Again, many, if not most, multidimensional continued fraction algorithms can be put into this language \cite{SMALLTRIP}, which is why these are natural maps to consider.  To get a feel of these maps, we consider two examples in a bit more detail.

\subsection{The $((23), (23), (23))$ Case}\label{((23), (23), (23))}
This is simply an example of what we will later call a polynomial-behavior triangle partition map.  It is in and of itself not that significant.  
We have 
\begin{eqnarray*}
F_0 ((23), (23), (23)) &=& (23) F_0 (23) \\
&=& \left(\begin{array}{ccc}1 & 0 & 0 \\
 					0 & 0 & 1 \\
 					0 & 1 & 0 \\  
					\end{array}\right) 
	 \left(\begin{array}{ccc}
 					0 & 0 & 1 \\
 					1 & 0 & 0 \\
 					0 & 1 & 1 \\  
					\end{array}\right) 
	\left(\begin{array}{ccc}
 					1 & 0 & 0 \\
 					0 & 0 & 1 \\
					 0 & 1 & 0 \\  \end{array}\right)  \\
&=& \left(
\begin{array}{ccc}
 0 & 1 & 0 \\
 0 & 1 & 1 \\
 1 & 0 & 0 \\  
\end{array}
\right),\\
F_1 ((23), (23), (23)) &=& (23) F_1 (23) \\
&=& \left(\begin{array}{ccc}
 					1 & 1 & 0 \\
 					0 & 1 & 0 \\
					 0 & 0 & 1 \\  \end{array}\right) 
\end{eqnarray*}

We want to see what the subtriangles $\triangle_k ((23), (23), (23))$ look like.  We have 
\begin{eqnarray*}
\bigtriangleup_k  ((23), (23), (23)) &=& V F_1^k((23), (23), (23)) F_0((23), (23), (23))\\
&=&  \left(\begin{array}{ccc}
 					1 & 1 & 1 \\
 					0 & 1 & 1 \\
 					0 & 0 & 1 \\  
					\end{array}\right)   \left(\begin{array}{ccc}
 					1 & 1 & 0 \\
 					0 & 1 & 0 \\0&0&1 \end{array}\right)^k   \left(
\begin{array}{ccc}
 0 & 1 & 0 \\
 0 & 1 & 1 \\
 1 & 0 & 0 \\  
\end{array}
\right)  \\
&=&\left(\begin{array}{ccc}
 					1 & 1 & 1 \\
 					0 & 1 & 1 \\
 					0 & 0 & 1 \\  
					\end{array}\right)   \left(\begin{array}{ccc}
 					1 & k & 0 \\
 					0 & 1 & 0 \\0&0&1 \end{array}\right)   \left(
\begin{array}{ccc}
 0 & 1 & 0 \\
 0 & 1 & 1 \\
 1 & 0 & 0 \\  
\end{array}
\right)  \\
&=& \left(\begin{array}{ccc}
 1 & k+2 & k+1 \\
 1 & 1 & 1 \\
 1 & 0 & 0 \\  
\end{array}
\right) 
\end{eqnarray*} 

Pictorially, we have

\begin{center}
\setlength{\unitlength}{.1 cm}
\begin{picture}(70,70)
\put(5,5){\line(1,0){60}}
\put(65,5){\line(0,1){60}}
\put(5,5){\line(1,1){60}}

\put(0,0){(0,0)}
\put(60,0){(1,0)}
\put(60,68){(1,1)}

\put(65,65){\line(-1,-2){30}}
\put(65,65){\line(-2,-3){40}}
\put(65,65){\line(-3, -4){45}}
\put(65,65){\line(-4, -5){48}}
\put(65,65){\line(-5, -6){50}}

\put(-22,40){$\triangle_{0}((23), (23), (23))$}
\put(12,40){\vector(1,0){45}}
\put(-22,27){$\triangle_{1}((23), (23), (23))$}
\put(12,27){\vector(1,0){30}}
\put(-22,22){$\triangle_{2}((23), (23), (23))$}
\put(12,22){\vector(1,0){22}}
\put(-22,18){$\triangle_{3}((23), (23), (23))$}
\put(12,18){\vector(1,0){17}}
\put(-22,14){$\triangle_{4}((23), (23), (23))$}
\put(12,14){\vector(1,0){11}}

\end{picture}
\end{center}
In the plane $\R^2$, the vertices of each $\bigtriangleup_k  ((23), (23), (23)) $ are 
$$\left\{\left( \frac{1}{1}, \frac{1}{1}  \right), \left( \frac{1}{k+2}, \frac{0}{1}  \right), \left( \frac{1}{k+1}, \frac{0}{1}  \right)\right\}= \left\{\left(1,1  \right), \left( 1/(k+2), 0  \right), \left(1/(k+1), 0 \right)\right\}$$

Now,  for any $(x,y) \in \bigtriangleup_k  ((23), (23), (23)) $, we have
\begin{eqnarray*}
T_{\sigma, \tau_0, \tau_1}(x, y) &=& \pi\left((1, x, y) \left(V F_0^{-1}(\sigma, \tau_0,  \tau_1) F_1^{-k}(\sigma, \tau_0,  \tau_1) V^{-1} \right)^{T}\right)  \\
&=& \left(\frac{x-y}{x},\frac{-1 + (2 + k) x + (- 1 - k) y}{x} \right),
\end{eqnarray*}
in which case,
$$T_{23,23,23}^{-1}=\left(\frac{1}{1+(1+k)x-y}, \frac{1-x}{1+(1+k)x-y}\right),$$
a formula that we will need later.

\subsection{  The $(e, e, (13))$ Case }   \label{(e, e, (13))}

Here we present another example of a TRIP map.  We will see that its behavior is quite different than that of  TRIP maps $(e,e,e)$ and $((23), (23), (23)),$ a difference that will be explained in section \ref{poly}.

We first need to calculate $F_0(e, e, (13))$ and $F_1(e, e, (13))$.  $F_0(e, e, (13))$ is easy, as
$$F_0(e, e, (13)) = e F_0 e =  \left(\begin{array}{ccc}
 					0 & 0 & 1 \\
 					1 & 0 & 0 \\
 					0 & 1 & 1 \\  
					\end{array}\right)   .$$
For the other matrix, we have 
\begin{eqnarray*}
F_1(e,e,(13)) & = & eF_1  (13) \\
&=&    \left( \begin{array}{ccc} 1 & 0 & 1  \\ 0 & 1 & 0 \\ 0 & 0 & 1\end{array}  \right)   \left( \begin{array}{ccc} 0 & 0 & 1  \\ 0 & 1 & 0 \\ 1 & 0 & 0\end{array}  \right)\\
&=& \left( \begin{array}{ccc} 1 & 0 & 1  \\ 0 & 1 & 0 \\ 1 & 0 & 0\end{array}  \right) 
\end{eqnarray*}
 We need to find a clean formula for $F_1^k(e,e,(13)) .$  We will need 
 the Fibonacci numbers, which are given by 
 $f_{k+1} = f_{ k } + f_{ k-1 },$
 with initial terms
 $f_{-1}=0, f_0 = 1.$  Note that this means that $f_{-2} = 1.$
 
\begin{prop}
$$F_1^k(e,e,(13)) =  \left( \begin{array}{ccc} f_{ k } & 0 & f_{ k-1 }  \\ 0 & 1 & 0 \\   f_{  k-1  } & 0 & f_ {k-2}\end{array}  \right)  $$
\end{prop}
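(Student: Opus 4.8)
The plan is to prove the formula by induction on $k$, using the recursive structure of the Fibonacci numbers to mirror the matrix multiplication. First I would check the base cases: for $k=0$ the claimed matrix is $\left(\begin{smallmatrix} f_0 & 0 & f_{-1} \\ 0 & 1 & 0 \\ f_{-1} & 0 & f_{-2} \end{smallmatrix}\right) = \left(\begin{smallmatrix} 1 & 0 & 0 \\ 0 & 1 & 0 \\ 0 & 0 & 1 \end{smallmatrix}\right)$, using $f_0=1$, $f_{-1}=0$, $f_{-2}=1$, which is correctly the identity $F_1^0$; and for $k=1$ the claimed matrix is $\left(\begin{smallmatrix} f_1 & 0 & f_0 \\ 0 & 1 & 0 \\ f_0 & 0 & f_{-1} \end{smallmatrix}\right) = \left(\begin{smallmatrix} 1 & 0 & 1 \\ 0 & 1 & 0 \\ 1 & 0 & 0 \end{smallmatrix}\right)$, since $f_1 = f_0 + f_{-1} = 1$, which agrees with the matrix $F_1(e,e,(13))$ computed in the excerpt.

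For the inductive step, assume the formula holds for some $k \geq 1$ and compute $F_1^{k+1}(e,e,(13)) = F_1^k(e,e,(13)) \cdot F_1(e,e,(13))$ by multiplying
$$\left( \begin{array}{ccc} f_{ k } & 0 & f_{ k-1 }  \\ 0 & 1 & 0 \\   f_{  k-1  } & 0 & f_ {k-2}\end{array}  \right) \left( \begin{array}{ccc} 1 & 0 & 1  \\ 0 & 1 & 0 \\ 1 & 0 & 0\end{array}  \right).$$
The $(1,1)$ entry becomes $f_k + f_{k-1} = f_{k+1}$, the $(1,3)$ entry becomes $f_k$, the $(3,1)$ entry becomes $f_{k-1} + f_{k-2} = f_k$, and the $(3,3)$ entry becomes $f_{k-1}$; the middle row and the zero entries are clearly preserved. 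This yields exactly $\left( \begin{smallmatrix} f_{ k+1 } & 0 & f_{ k }  \\ 0 & 1 & 0 \\   f_{  k  } & 0 & f_ {k-1}\end{smallmatrix}  \right)$, completing the induction.

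I do not anticipate any serious obstacle here: the only subtlety is getting the Fibonacci indexing conventions straight (the paper's choice $f_{-1} = 0$, $f_0 = 1$, hence $f_{-2} = 1$), so that the base cases $k=0$ and $k=1$ both check out, and so that the shifted recursion $f_{k} + f_{k-1} = f_{k+1}$ is applied with the right offsets in the inductive step. One could alternatively start the induction at $k=1$ and verify $k=0$ separately as a degenerate case, but since $f_{-2}=1$ makes the formula valid at $k=0$ as well, a single induction starting from $k=0$ (or even using $k=0,1$ jointly as base of a two-step induction mirroring the Fibonacci recursion) suffices.
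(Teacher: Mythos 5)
Your proposal is correct and follows essentially the same route as the paper: induction on $k$ with the base case verified via $f_0=1$, $f_{-1}=0$, $f_{-2}=1$, and the inductive step carried out by multiplying the claimed matrix by $F_1(e,e,(13))$ and applying the Fibonacci recursion entrywise. The only cosmetic difference is that you phrase the step as $F_1^{k+1}=F_1^k\cdot F_1$ (and also check $k=1$) whereas the paper writes $F_1^k=F_1^{k-1}\cdot F_1$.
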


\begin{proof} By induction.  For $k=0$, we have that $F_1^0(e,e,(13))$ is the identity. But for $k=0$, we have 
$$ \left( \begin{array}{ccc} f_{ k } & 0 & f_{ k-1 }  \\ 0 & 1 & 0 \\   f_{  k-1  } & 0 & f_ {k-2}\end{array}  \right) = \left( \begin{array}{ccc} f_{ 0 } & 0 & f_{ -1 }  \\ 0 & 1 & 0 \\   f_{  -1  } & 0 & f_ {-2}\end{array}  \right) = \left( \begin{array}{ccc} 1 & 0 & 0  \\ 0 & 1 & 0 \\   0 & 0 &1\end{array}  \right).$$

Then in general we have 
\begin{eqnarray*}
F_1(e,e,(13))^k   &=&  F_1(e,e,(13))^{k-1}   F_1(e,e,(13))  \\
&=&    \left( \begin{array}{ccc} f_{ k-1 } & 0 & f_{ k-2 }  \\ 0 & 1 & 0 \\   f_{  k-2  } & 0 & f_ {k-3}\end{array}  \right)  \left( \begin{array}{ccc} 1 & 0 & 1  \\ 0 & 1 & 0 \\ 1 & 0 & 0\end{array}  \right) \\
&=&    \left( \begin{array}{ccc} f_{ k-1 }  + f_{k-2} & 0 & f_{ k-1 }  \\ 0 & 1 & 0 \\   f_{  k-2  } + f_{k-3}& 0 & f_ {k-2}\end{array}  \right) \\
&=&  \left( \begin{array}{ccc} f_{ k } & 0 & f_{ k-1 }  \\ 0 & 1 & 0 \\   f_{  k-1  } & 0 & f_ {k-2}\end{array}  \right) ,
\end{eqnarray*}
as desired.

\end{proof}

Now to find the subtriangles $\triangle_k(e,e,(13)) = VF_1(e,e,(13))^{k}F_0(e,e,(13))$.

 \begin{prop}
 $$\triangle_k(e,e,(13)) =V F_1(e,e,(13))^{k} F_0(e,e,(13)) =    \left( \begin{array}{ccc} 1& f_{ k } & f_{ k + 2 }  \\ 1 & f_{k-2} & f_{k} \\   0& f_{ k-2 } & f_ {k}\end{array}  \right).$$
 \end{prop}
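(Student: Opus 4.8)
The plan is to prove this by a single matrix multiplication, feeding in the closed form for $F_1^k(e,e,(13))$ from the previous proposition and repeatedly collapsing sums of consecutive Fibonacci numbers via $f_{k+1} = f_k + f_{k-1}$.

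First I would left-multiply by $V$. Because $V = \left(\begin{smallmatrix}1&1&1\\0&1&1\\0&0&1\end{smallmatrix}\right)$ is unipotent upper triangular, the product $V F_1^k(e,e,(13))$ is obtained by replacing the first row with the sum of all three rows of $F_1^k(e,e,(13))$ and the second row with the sum of its last two rows, leaving the third row unchanged. The identities $f_k + f_{k-1} = f_{k+1}$ and $f_{k-1} + f_{k-2} = f_k$ then simplify the entries to
$$V F_1^k(e,e,(13)) = \left( \begin{array}{ccc} f_{k+1} & 1 & f_k \\ f_{k-1} & 1 & f_{k-2} \\ f_{k-1} & 0 & f_{k-2} \end{array}\right).$$

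Next I would right-multiply by $F_0(e,e,(13)) = \left(\begin{smallmatrix}0&0&1\\1&0&0\\0&1&1\end{smallmatrix}\right)$, which simply rearranges and adds columns: the first column of the product is the second column above, the second column is the third column above, and the third column is the sum of the first and third columns above. One more use of $f_{k+1} + f_k = f_{k+2}$ and $f_{k-1} + f_{k-2} = f_k$ produces exactly the asserted matrix.

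I do not expect any genuine obstacle; this is bookkeeping. The only points requiring care are the nonstandard seeding of the Fibonacci sequence ($f_{-1}=0$, $f_0 = 1$, hence $f_{-2}=1$), which is what makes the recurrence-based simplifications valid all the way down to $k=0$, and the fact that ``$\triangle_k(e,e,(13)) = V F_1^k(e,e,(13)) F_0(e,e,(13))$'' is the usual abuse of notation identifying the subcone with a spanning matrix, so that in principle only the columns up to positive scaling matter; here, however, the computation delivers the stated representative on the nose. An alternative would be an induction on $k$ paralleling the proof of the preceding proposition, but the direct product is shorter.
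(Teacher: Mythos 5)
Your proposal is correct and is essentially the paper's own proof: both are the same direct computation of $V F_1(e,e,(13))^{k} F_0(e,e,(13))$ using the closed form for $F_1^k(e,e,(13))$ and the recurrence $f_{k+1}=f_k+f_{k-1}$ to collapse the entries. Your intermediate matrix $V F_1^k(e,e,(13))$ and the final product both check out exactly.
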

 \begin{proof}  By calculation this is
 \begin{eqnarray*}
 \triangle_k(e,e,(13))&=&  V F_1(e,e,(13))^{k} F_0(e,e,(13)) \\
  &=&   \left( \begin{array}{ccc} 1& f_{ k } & f_{ k + 2 }  \\ 1 & f_{k-2} & f_{k} \\   0& f_{ k-2 } & f_ {k}\end{array}  \right)  .
 \end{eqnarray*}

 \end{proof}

Thus the subtriangle $ \triangle_k(e,e,(13))$ in the plane $\R^2$ has vertices
$$(1,0), ( f_{k-2}/f_{k }, f_{k-2}/f_{k } ), ( f_{k}/f_{k+2 }, f_{k}/f_{k + 2 } ).$$

Pictorially, we have

\begin{center}
\setlength{\unitlength}{.1 cm}
\begin{picture}(70,70)
\put(5,5){\line(1,0){60}}
\put(65,5){\line(0,1){60}}
\put(5,5){\line(1,1){60}}
\put(0,0){(0,0)}
\put(60,0){(1,0)}
\put(60,68){(1,1)}

\put(65,5){\line(-1,1){30}}
\put(65,5){\line(-3,1){45}}
\put(65,5){\line(-3,2){36}}
\put(65,5){\line(-5,3){38}}

\put(5,40){$\triangle_{0}$}
\put(12,40){\vector(1,0){35}}

\put(5,14){$\triangle_{1}$}
\put(12,14){\vector(1,0){11}}

\put(5,31){$\triangle_{2}$}
\put(12,31){\vector(1,0){23}}

\put(5,23){$\triangle_{3}$}
\put(12,23){\vector(1,0){16}}

\end{picture}
\end{center}

We can now calculate the TRIP map, whose proof is just a somewhat painful calculation.

\begin{prop} For $(x,y) \in \triangle_k(e,e,(13))$, the TRIP map
$$T(e,e,(13)) (x,y) = \pi \left[ (1,x,y) \left(  V (VF_1(e,e,(13))^{k}F_0(e,e,(13))^{-1}    \right)^T  \right]$$
has $x$ coordinate
$$    	\frac{     (-1)^{k+1} f_{k+1}  +       (-1)^{k+2} f_{k+1} x   +  (-1)^{k+2} f_{k+2} y   }{     (-1)^{k+1} f_{k+1}  + ( 1- (-1)^{k+1} f_{k+1} )x   +  ( -1    + (-1)^{k+2} f_{k+2})y        }$$
 and $y$-coordinate
 $$\frac{      (-1)^{ k-1 } f_{ k - 1 } -   (-1)^{ k-1 } f_{ k - 1 } x +(-1)^{ k } f_{ k    } y          }{     (-1)^{k+1} f_{k+1}  + ( 1- (-1)^{k+1} f_{k+1} )x   +  ( -1    + (-1)^{k+2} f_{k+2})y        }	.					.$$
\end{prop}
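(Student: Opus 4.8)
The plan is to turn the general definition of $T_{\sigma,\tau_0,\tau_1}$ into an explicit matrix computation, using the two preceding propositions as input. Specializing to $(\sigma,\tau_0,\tau_1)=(e,e,(13))$, and noting $F_0(e,e,(13))=F_0$, the map on $\triangle_k(e,e,(13))$ is $T(e,e,(13))(x,y)=\pi\big((1,x,y)\,M_k^{T}\big)$ with $M_k:=V\,F_0^{-1}\,F_1^{-k}(e,e,(13))\,V^{-1}$; equivalently, since $\big(VF_1^{k}(e,e,(13))F_0\big)^{-1}=F_0^{-1}F_1^{-k}(e,e,(13))V^{-1}$, one may take $M_k=V\cdot\triangle_k(e,e,(13))^{-1}$ and invert the closed form for $\triangle_k(e,e,(13))$ supplied by the second proposition. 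Once $M_k$ is written out, $(1,x,y)M_k^{T}=\big(M_k(1,x,y)^{T}\big)^{T}$ is a triple of linear forms in $x$ and $y$, and $\pi$ simply divides the last two coordinates by the first; so the content of the proposition is the identification of these three linear forms.

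First I would assemble the three inverses. Here $V^{-1}$ and $F_0^{-1}$ are fixed integer matrices, written down once. The $k$-dependent factor is handled by the first proposition, which presents $F_1^{k}(e,e,(13))$ as the symmetric matrix with Fibonacci entries $f_k,f_{k-1},f_{k-2}$; its determinant is $f_kf_{k-2}-f_{k-1}^{2}$, which by Cassini's identity (in the normalization $f_{-1}=0$, $f_0=1$) equals $(-1)^{k}$. This single fact is the one ingredient that is not mere bookkeeping: it is what makes $F_1^{k}(e,e,(13))$ lie in $\mathrm{GL}_3(\Z)$, it produces $F_1^{-k}(e,e,(13))$ as an integer matrix with entries $(-1)^{k}f_k$, $(-1)^{k}f_{k-1}$, $(-1)^{k}f_{k-2}$, and it is the source of every $(-1)^{k}$ appearing in the statement.

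Then I would carry out the matrix products defining $M_k$, form $M_k(1,x,y)^{T}$, take its first coordinate as the common denominator, and apply $\pi$. The remaining work is cosmetic rewriting of coefficients: repeated use of $f_{j+1}=f_j+f_{j-1}$ (for instance $f_{k-1}+f_{k-2}=f_k$ and $f_{k+1}-f_{k-1}=f_k$) together with $(-1)^{k+1}=-(-1)^{k}$ and $(-1)^{k+2}=(-1)^{k}$ brings the three linear forms into the displayed shape. A useful consistency check is to run the identical recipe on $((23),(23),(23))$, where $F_1$ is unipotent so Cassini's identity degenerates and one recovers the formula for $T_{23,23,23}$ obtained above.

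The computation is mechanical from start to finish; the only real obstacle is bookkeeping --- keeping the Fibonacci indices and the alternating signs straight through the matrix products and the projection --- which is precisely why the statement describes it as ``a somewhat painful calculation'' rather than a delicate one.
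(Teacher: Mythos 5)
Your proposal is correct and is essentially the calculation the paper itself alludes to (the paper gives no written proof beyond calling it ``a somewhat painful calculation''): specialize the general definition to $(e,e,(13))$, invert $F_1^{k}(e,e,(13))$ using the Fibonacci form from the first proposition together with Cassini's identity $f_kf_{k-2}-f_{k-1}^{2}=(-1)^{k}$, multiply out, and simplify with the recurrence. Your identification of Cassini's identity as the source of the $(-1)^{k}$ factors is the one genuinely non-bookkeeping observation, and it is the right one.
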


The fact that we had to use two lines to write out the TRIP map $T(e,e,(13)) (x,y) $ is not just chance, as we will see in section \ref{poly}.

\section{Transfer Operators}
\label{transferoperators}
For general background on transfer operators and their importance, see Baladi \cite{Baladi00}.

 In general, for a dynamical system corresponding to the map $T: X \to X,$ a transfer operator linearly maps functions from a vector space of functions on $X$ to another (not necessarily different) vector space of functions on $X.$ For a more concrete definition, define a function $g: X \to \R.$ Then the transfer operator $\mathcal{L}_T$ acting on $f: X \to \R$ is defined by 
$$\mathcal{L}_T f(x) = \sum_{y:T(y)=x} g(y)f(y). $$
If $T$ is piece-wise differentiable, as for our maps, we choose $g = \frac{1}{\left|Jac(T)\right|}$, as is traditional and natural.  

The spectrum for each transfer operator yields much information about the algorithm.  One of the main motivations is that if the transfer operator has a leading eigenfunction with eigenvalue one with multiplicity one, we expect that this eigenfunction will give rise to the natural invariant measure for the map $T$.  This is exactly what Gauss conjectured for the transfer operator for the traditional continued fraction, conjectures only proven in the 1920s by Kuzmin.  (Again, the goal of this paper is to start such investigations for TRIP maps, and hence for almost all multi-dimensional continued fraction algorithms.)

We define our transfer operators to be 
$$\mathcal{L}_{T_{\sigma, \tau_0,\tau_1}}f(x,y) = \sum_{(a,b):T_{\sigma, \tau_0,\tau_1}(a,b)=(x,y)} \frac{1}{\left|\mbox{Jac}(T_{\sigma, \tau_0,\tau_1}(a,b))\right|} f(a,b).$$
While a long calculation is required to arrive at the transfer operator corresponding to each $(\sigma, \tau_0,\tau_1),$ the calculations are not difficult theoretically and have been carried out for each of the 216 triangle partition  maps. As an example, as shown in \cite{Garrity15},
$$\mathcal{L}_{T_{e,e,e}}f(x,y) = \sum_{k=0}^{\infty} \frac{1}{(1+kx + y )^3} f\left( \frac{1}{1+kx+ y},  \frac{x}{1+kx+y} \right).$$

We have calculated the explicit form of $$\mathcal{L}_{T_{\sigma, \tau_0,\tau_1}}f(x,y) = \sum_{(a,b):T_{\sigma, \tau_0,\tau_1}(a,b)=(x,y)} \frac{1}{\left|\mbox{Jac}(T_{\sigma, \tau_0,\tau_1}(a,b))\right|} f(a,b)$$ for all $(\sigma,\tau_0,\tau_1)\in S_{3}^{3}.$ As mentioned in the introduction, one of the main goals for this paper is to show that the family of 216 triangle partition maps naturally splits into two classes: 108 maps exhibiting what we term polynomial behavior, and 108 maps exhibiting  non-polynomial behavior. Polynomial-behavior maps' transfer operators have compact explicit forms while non-polynomial-behavior maps' transfer operators have very lengthy and complex forms.  The  explicit forms for all of these maps are available online (see \cite{Amburg}).  

As another example, let us return to the $((23), (23), (23))$ TRIP map of subsection \ref{((23), (23), (23))}. This has to  have polynomial behavior, as otherwise it would be too cumbersome to express.
We need to find  the Jacobian of $T_{23,23,23}$ with any $x$ replaced by the first component of $T_{23,23,23}^{-1}$ and any $y$ replaced by the second component of $T_{23,23,23}^{-1}.$ 
The Jacobian is:
\begin{eqnarray*}
\mbox{Jac}(T_{23,23,23})
&=& 
\det
\begin{pmatrix}
\frac{\partial}{\partial x} \left(\frac{x-y}{x} \right)  & \frac{\partial}{\partial y} \left(\frac{x-y}{x} \right)\\
\frac{\partial}{\partial x} \left( \frac{-1 + (2 + k) x + (- 1 - k) y}{x}\right)  & \frac{\partial}{\partial y} \left(\frac{-1 + (2 + k) x + (- 1 - k) y}{x} \right)\\
\end{pmatrix}\\
&=&
\det
\begin{pmatrix}
\frac{y}{x^2} & \frac{-1}{x} \\
\frac{1+y+ky}{x^2} & \frac{-(1+k)}{x} \\
\end{pmatrix}\\
&=&1/x^3
\end{eqnarray*}
Substituting the first component of $T_{23,23,23}^{-1}$ for $x,$ the Jacobian becomes $1/(\frac{1}{1+(1+k)x-y})^3=(1+(1+k)x-y)^3,$ so that 
$$\frac{1}{\mbox{Jac}(T_{23,23,23})}=\frac{1}{(1+(1+k)x-y)^3}.$$

Hence
\begin{eqnarray*}
\mathcal{L}_{T_{23,23,23}}f(x,y)&=&\sum_{(a,b):T_{23, 23,23}(a,b)=(x,y)} \frac{1}{\left|\mbox{Jac}(T_{23, 23,23}(a,b))\right|} f(a,b)\\
&=&\sum_{k=0}^{\infty}\frac{1}{(1+(1+k)x-y)^3}f\left(\frac{1}{1+(1+k)x-y}, \frac{1-x}{1+(1+k)x-y}\right).
\end{eqnarray*}

Of course, we would now like to calculate the transfer operator for the  TRIP map ${(e, e, (13))}$ of subsection \ref{(e, e, (13))}, but the calculations are a bit too cumbersome to actually write out.  However, note that the transfer operator will be in terms of Fibonacci numbers, and hence exponential in terms of the parameter $k$.

\section{Polynomial and Non-Polynomial Behavior in Transfer Operators}\label{poly}
\subsection{A Clean Criterion}
The transfer operator of the Gauss map $G:[0,1]\rightarrow [0,1]$ is 
$$\mathcal{L}_{G}(f)(x) = \sum_{k=1}^{\infty} \frac{1}{(k+x)^2} f\left( \frac{1}{k+x} \right).$$
Note that the denominator is a polynomial that is the square of a polynomial linear in $k$ and linear in $x$. The transfer operator for the original triangle map $T_{e,e,e}:\triangle \rightarrow \triangle$ is 
$$\mathcal{L}_{T_{e,e,e}}(f)(x,y) = \sum_{k=0}^{\infty} \frac{1}{(1+k x + y)^3} f\left(\frac{1}{(1+k x + y)},\frac{x}{(1+k x + y)} \right).$$
Now the denominator is a polynomial that is the cube of a polynomial linear in $k$ and linear in $x$ and $y$.  In the previous section, we have shown that the transfer operator for the TRIP map $((23),(23), (23))$ also has this behavior.  The goal of this section is to show that similar analogs hold for precisely 106 of the other triangle partition maps (and thus giving us polynomials behavior for 108 fo the maps), while the behavior of the transfer operator for the remaining 108 triangle maps is drastically different.

\begin{definition} The transfer operator for a triangle partition map $T_{\sigma, \tau_0,\tau_1}$ is said to have polynomial behavior if its Jacobian $\mbox{Jac}(T_{\sigma, \tau_0,\tau_1})$ is a polynomial in the variables $k$, $x$ and $y$, with possibly  a $(-1)^k$ term appearing.  Otherwise, we say that the transfer operator has non-polynomial behavior. 
\end{definition}
We will show that those transfer operators that have polynomial behavior  actually have their Jacobians equal to 
the cube of a polynomial that is linear in $k$ and linear in $x$ and $y$, with possibly  a $(-1)^k$ term appearing. 
In Appendix B of Part II, we explicitly list the 108 transfer operators that have polynomial behavior.  Those without this behavior are far too messy to list, a fact that we find interesting.

We will need the following well-known lemma, whose proof is a calculation:
\begin{lemma}

Consider a vector $v=(1,x,y)$, a matrix 
$M=
\begin{pmatrix}
a & b & c\\
d & e & f\\
g & h & i
\end{pmatrix},$
and the map 
$$\pi(vM)=\left(\frac{b+ex+hy}{a+dx+gy},\frac{c+fx+iy}{a+dx+gy}\right).$$
Then $$\mbox{Jac}(\pi(vM))=\frac{\det(M)}{(vM(1,0,0)^T)^3}.$$

\end{lemma}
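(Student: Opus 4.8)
The plan is to compute both sides of the claimed identity directly and check they agree, since the lemma is purely a statement of multivariable calculus. First I would write out the map $\pi(vM)$ explicitly in the two coordinates
$$u(x,y)=\frac{b+ex+hy}{a+dx+gy}, \qquad w(x,y)=\frac{c+fx+iy}{a+dx+gy},$$
and observe that the common denominator $a+dx+gy$ is precisely the first entry of the row vector $vM$, which is exactly $vM(1,0,0)^T$. So the cube in the denominator of the right-hand side is just $(a+dx+gy)^3$, and the task reduces to showing $\det\begin{pmatrix}\partial_x u & \partial_y u\\ \partial_x w & \partial_y w\end{pmatrix} = \det(M)/(a+dx+gy)^3$.

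Next I would compute the partial derivatives by the quotient rule. Writing $D = a+dx+gy$, one gets $\partial_x u = \big(eD - d(b+ex+hy)\big)/D^2$ and similarly for the other three entries, each carrying a factor $1/D^2$. Pulling these factors out, the $2\times 2$ Jacobian determinant becomes $\frac{1}{D^4}$ times the determinant of a matrix whose entries are bilinear expressions like $eD - d(b+ex+hy) = (ea-db) + (eg-dh)y$, etc. Expanding that $2\times 2$ determinant, the terms quadratic and linear in $x,y$ should cancel, leaving a constant multiple of $D$; that constant is $\det(M)$ (expanded along the first column, $aei - afh - \dots$). Hence the whole expression collapses to $\det(M)\cdot D / D^4 = \det(M)/D^3$, which is the claim.

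The only real obstacle is the bookkeeping in the last expansion: one must verify that all the higher-order terms in $x$ and $y$ genuinely cancel and that the surviving coefficient is $\det(M)$ and not some other combination. A clean way to organize this is to note that the derivative matrix $\begin{pmatrix}\partial_x u & \partial_y u\\ \partial_x w & \partial_y w\end{pmatrix}$ equals $\frac{1}{D}$ times the lower-right $2\times 2$ block of $M - \frac{1}{D}(vM)^T(d,g,\ast)$-type corrections; more transparently, $\pi(vM)$ is the composition of the linear map $v\mapsto vM$ with the standard chart $\pi$, so by the chain rule its Jacobian is $\det(M)$ times the Jacobian of $\pi$ evaluated at $vM$, and a one-line computation gives $\mathrm{Jac}(\pi)(X,Y,Z) = 1/X^3$ (after accounting for the fact that $\pi:\mathbb{R}^3\to\mathbb{R}^2$ is composed with $x\mapsto(1,x,y)$, so the relevant $3\times 2$ derivative has rank considerations handled by the first-column expansion). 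Adopting this chain-rule viewpoint makes the cancellation automatic and reduces the proof to the single base case $\mathrm{Jac}(\pi(x,y,z))=1/x^3$ for the ray-to-point projection, which is the computation already implicit in the $((23),(23),(23))$ example where $\mathrm{Jac}(T_{23,23,23}) = 1/x^3$ was obtained.
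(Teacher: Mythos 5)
Your direct computation is exactly the ``calculation'' the paper alludes to (it gives no proof beyond that remark), and it checks out: the $N_1N_2$ cross-terms cancel in the $2\times 2$ determinant, a factor of $D=a+dx+gy$ pulls out, and the remaining linear-in-$(x,y)$ coefficients vanish while the constant term is $\det(M)$, giving $\det(M)/D^3$. The chain-rule reformulation in your last paragraph is not needed and is stated too loosely to stand on its own (the intermediate derivative is a non-square $3\times 2$ matrix, so ``Jacobian of the composite equals product of Jacobians'' requires more care), but the quotient-rule argument is complete and correct as written.
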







\begin{theorem} \label{theorem:poly}A triangle partition map $T_{\sigma, \tau_0,\tau_1}$ has polynomial behavior if and only if the eigenvalues of the associated $F_1(\sigma, \tau_0,\tau_1)=\sigma F_1 \tau_1$ all have magnitude 1; it has non-polynomial-behavior otherwise. \end{theorem}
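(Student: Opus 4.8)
The plan is to show that ``polynomial behavior'' is governed entirely by the growth rate of the matrix powers $A^{k}$, where $A:=F_1(\sigma,\tau_0,\tau_1)=\sigma F_1\tau_1$, and then to read that growth rate off the eigenvalues of $A$.

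First I would use the Lemma above to trap the Jacobian inside a single linear form. The branch of $T_{\sigma,\tau_0,\tau_1}^{-1}$ into $\triangle_k(\sigma,\tau_0,\tau_1)$ is $(x,y)\mapsto\pi\bigl((1,x,y)R_k^{T}\bigr)$ with $R_k:=V\,F_1(\sigma,\tau_0,\tau_1)^{k}\,F_0(\sigma,\tau_0,\tau_1)\,V^{-1}$, so substituting the components of $T_{\sigma,\tau_0,\tau_1}^{-1}$ into $\mbox{Jac}(T_{\sigma,\tau_0,\tau_1})$ produces $1/\mbox{Jac}\bigl(\pi((1,x,y)R_k^{T})\bigr)$, which by the Lemma equals $\ell_k(x,y)^{3}/\det R_k$, where $\ell_k(x,y)=r_1+r_2x+r_3y$ and $(r_1,r_2,r_3)$ is the first row of $R_k$. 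Now $\det R_k=\det(F_1(\sigma,\tau_0,\tau_1))^{k}\det(F_0(\sigma,\tau_0,\tau_1))=\pm1$, with the sign possibly depending on the parity of $k$ (because $\det F_0=\det F_1=1$ and $\sigma,\tau_0,\tau_1$ are permutation matrices), so the determinant contributes at worst a $(-1)^{k}$. Since the first row of $V$ is $(1,1,1)$, we have $(r_1,r_2,r_3)=\mathbf 1^{T}A^{k}\cdot\bigl(F_0(\sigma,\tau_0,\tau_1)V^{-1}\bigr)$ with $A=\sigma F_1\tau_1$ and $F_0(\sigma,\tau_0,\tau_1)V^{-1}$ a fixed invertible matrix. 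Hence $T_{\sigma,\tau_0,\tau_1}$ has polynomial behavior precisely when the column-sum row vector $\mathbf 1^{T}A^{k}$ is polynomial in $k$ up to a $(-1)^{k}$, and in any case polynomial behavior forces $\mathbf 1^{T}A^{k}$ to be polynomially bounded in $k$. In particular this depends only on $A=\sigma F_1\tau_1$, i.e.\ only on $(\sigma,\tau_1)$, matching the count $108=18\cdot6$.

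Next I would split on the eigenvalues of $A$. The matrix $A=\sigma F_1\tau_1$ is a $0$--$1$ integer matrix with $|\det A|=|\det F_1|=1$, so its characteristic polynomial is monic of degree $3$ over $\Z$ with $\prod_i|\lambda_i|=1$; consequently ``no eigenvalue of modulus $>1$'' is equivalent to ``every eigenvalue has modulus exactly $1$''. If some eigenvalue has modulus $>1$, then by the (not necessarily irreducible) Perron--Frobenius theorem the spectral radius $\rho(A)$ is itself an eigenvalue and $\rho(A)>1$, and a short case analysis of the three eigenvalues shows $\mbox{tr}(A^{k})=\sum_i\lambda_i^{k}$ is not polynomially bounded (it grows exponentially along the even integers). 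Since $A^{k}\ge0$ entrywise, the total entry sum $\mathbf 1^{T}A^{k}\mathbf 1$ is $\ge\mbox{tr}(A^{k})$ while being a fixed linear functional of $\mathbf 1^{T}A^{k}$; hence $\mathbf 1^{T}A^{k}$ is not polynomially bounded and $T_{\sigma,\tau_0,\tau_1}$ has non-polynomial behavior. If instead every eigenvalue has modulus $1$, the real eigenvalues are $\pm1$ and any complex-conjugate pair $e^{\pm i\theta}$ contributes the factor $x^{2}-(2\cos\theta)x+1$, which lies in $\Z[x]$ only for $2\cos\theta\in\{-2,-1,0,1,2\}$, i.e.\ $\theta\in\{0,\pi/3,\pi/2,2\pi/3,\pi\}$; so every eigenvalue is a root of unity of order dividing $12$, whence $A^{12}$ has all eigenvalues $1$, $(A^{12}-I)^{3}=0$ by Cayley--Hamilton, and
\[
A^{12q+s}=A^{s}\Bigl(I+q\,(A^{12}-I)+\tbinom{q}{2}(A^{12}-I)^{2}\Bigr),\qquad 0\le s<12,
\]
shows $A^{k}$ --- hence $\mathbf 1^{T}A^{k}$, hence $\ell_k$, hence $\mbox{Jac}(T_{\sigma,\tau_0,\tau_1})$ --- is polynomial in $k$ (a priori quasi-polynomial, of period dividing $12$ and degree $\le2$): polynomial behavior. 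A finite check over the $18$ pairs $(\sigma,\tau_1)$ satisfying the eigenvalue condition then upgrades this to the sharper claim that $\mbox{Jac}(T_{\sigma,\tau_0,\tau_1})$ is the cube of a form linear in $k$ and in $x,y$ (with possibly a $(-1)^{k}$), which is exactly the table in Appendix~\ref{app:formoftransfer}.

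The step I expect to be the crux is the modulus-$>1$ case: one must be sure the exponential blow-up of $A^{k}$ is genuinely \emph{detected} by the particular linear functional coming from the first row of $R_k$, rather than being destroyed by cancellation among entries. Positivity makes this painless --- $A^{k}\ge0$ together with the positivity of $\mathbf 1$ forces $\mathbf 1^{T}A^{k}\mathbf 1\ge\mbox{tr}(A^{k})$ --- so no information about the (possibly irrational or complex) eigenvectors of $A$ is needed. The only remaining obstacle is bookkeeping: the converse direction a priori yields only quasi-polynomial dependence on $k$, and verifying that the period and degree in fact collapse to $\le2$ and $\le1$ for the maps that actually occur is precisely the explicit computation recorded in the appendix.
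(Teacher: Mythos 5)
Your proposal is correct and follows the same skeleton as the paper's proof --- the same Jacobian lemma, the same reduction to the $k$-dependence of the first column of $M_{\sigma,\tau_0,\tau_1}=R_k^T$ --- but the classification step is argued quite differently. The paper simply computes the Jordan decompositions of all $36$ matrices $(\sigma F_1\tau_1)^T$, finds six canonical forms $J_1,\dots,J_6$, and reads off polynomial versus exponential growth of $J^k$ by inspection; you replace that computation with general theory: Perron--Frobenius plus the entrywise nonnegativity of $A^k$ (so that $\mathbf 1^{T}A^{k}\mathbf 1\ge\mathrm{tr}(A^{k})$) in the modulus-$>1$ case, and Kronecker's observation about unit-circle algebraic integers plus Cayley--Hamilton in the modulus-$1$ case. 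Your route buys two things the paper does not make explicit. First, it genuinely settles the cancellation question: the paper asserts that exponential growth of $A^k$ forces exponential growth of the first column of $M$, but does not rule out cancellation under the fixed change of basis; your trace-positivity argument, combined with invertibility of $F_0(\sigma,\tau_0,\tau_1)V^{-1}$, closes that loophole cleanly. Second, it isolates exactly what residual finite verification is needed: your unit-modulus argument a priori yields only a quasi-polynomial of period dividing $12$, and since the paper's definition of polynomial behavior admits only a $(-1)^k$ factor, one must still check that no primitive $3$rd, $4$th or $6$th roots of unity occur among the eigenvalues (equivalently, that only $J_1,J_2,J_3$ arise); this is precisely the content of the paper's explicit Jordan-form computation, so the finite check you defer to is the same one the paper performs, just smaller in scope. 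The only cosmetic caution is that you argue about growth of the coefficients of $\ell_k$ rather than literal polynomiality of $\ell_k^3$, but since the dichotomy you establish is polynomially bounded versus exponentially unbounded, this distinction is harmless.
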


\begin{proof}
Our maps 
$T_{\sigma, \tau_0, \tau_1}$ are all infinite-to-one, but, for each nonegative integer $k$,  the map  is  one-to-one and onto when restricted to the subtriangle $\triangle_k(\sigma, \tau_0, \tau_1) $:
$$T_{\sigma, \tau_0, \tau_1}:\triangle_k(\sigma, \tau_0, \tau_1) \rightarrow \triangle.$$
For the rest of this proof, we will fix $k$, allowing us to consider the inverse $T_{\sigma, \tau_0, \tau_1}^{-1}.$ 

By the previous lemma and the definition of $T_{\sigma, \tau_0,\tau_1},$ it follows that $$\frac{1}{\mbox{Jac}(T_{\sigma, \tau_0,\tau_1}(a,b))}=\frac{det\left(M_{\sigma, \tau_0,\tau_1}\right)}{((1,x,y)M_{\sigma, \tau_0,\tau_1}(1,0,0)^{T})^{3}}$$ where $$M_{\sigma, \tau_0,\tau_1}=((V F_{0}^{-1}(\sigma, \tau_0,\tau_1) F_{1}^{-k}(\sigma, \tau_0,\tau_1)V^{-1})^{T})^{-1}.$$ 
Since $\det\left(M_{\sigma, \tau_0,\tau_1}\right)$ is real and has magnitude $\pm 1$ (since all matrices involved in its calculation also have real determinants of magnitude $\pm 1$), this implies that a given $T_{\sigma, \tau_0,\tau_1}$ is polynomial-behavior if and only if the first column of $M_{\sigma, \tau_0,\tau_1}$ contains terms polynomial in $k.$ 

For simplicity, write $M_{\sigma, \tau_0,\tau_1} = \left(C_1,C_2,C_3\right),$ where $C_i$ are the columns of $M_{\sigma, \tau_0,\tau_1}.$  
Note that the inverse of a triangle partition map, $T_{\sigma, \tau_0, \tau_1}^{-1},$ can be written as
\begin{eqnarray*}
T_{\sigma, \tau_0, \tau_1}^{-1}(x, y) 
&=& \pi\left((1, x, y)((V F_{0}^{-1}(\sigma, \tau_0,\tau_1) F_{1}^{-k}(\sigma, \tau_0,\tau_1)V^{-1})^{T})^{-1}      \right) \\
&=& \pi\left((1, x, y) M_{\sigma, \tau_0,\tau_1}\right) \\
&=& \left(\frac{(1,x,y)\cdot C_2}{(1,x,y)\cdot C_1},\frac{(1,x,y)\cdot C_3 }{(1,x,y)\cdot C_1}\right). \\
\end{eqnarray*}
Note that for any $(x,y) \in \bigtriangleup,$ $T_{\sigma, \tau_0, \tau_1}^{-1}(x, y)$ must be bounded as it must land back inside $\bigtriangleup.$ Since the inverse of each triangle partition map is bijective and since the choice of $k$ depends on which $\bigtriangleup_k$ the original $(x,y)$ lies in,  the first column of $M_{\sigma, \tau_0,\tau_1}$ must depend on $k.$ Thus, to show that the first column of $M_{\sigma, \tau_0,\tau_1}$ depends on $k$ polynomially, it is sufficient to show that $M_{\sigma, \tau_0,\tau_1}$ exhibits only polynomial dependence on $k$ -- for then, by the above argument, the first row of $M_{\sigma, \tau_0,\tau_1}$ must necessarily exhibit only polynomial dependence on $k.$

We have (suppressing the $(\sigma, \tau_0,\tau_1)$ in some of the matrices)
\begin{eqnarray*}
M_{\sigma, \tau_0,\tau_1}&=&((V F_{0}^{-1} F_{1}^{-k}V^{-1})^{T})^{-1} \\
&=& \left((V^{-1})^{T}((F_1^{-1})^k)^T(F_0^{-1})^{T}V^{T}\right)^{-1} \\
&=& (V^T)^{-1}((F_0^{-1})^T)^{-1}(((F_1^{-1})^k)^T)^{-1}((V^{-1})^T)^{-1} \\
&=& (V^T)^{-1}F_0^{T}(F_1^T)^{k}V^T.
\end{eqnarray*}

Let $A=(F_1)^{T}.$ We can find the Jordan decomposition of $A$; i.e. we can write it as 
$$A=P J P^{-1}$$
where $P$ is some invertible matrix of dimensions identical to those of $A,$ and $J$ is the Jordan normal form of $A$. Performing the Jordan decomposition for all 36 unique $A$s we see that there exist only six unique $J$s, namely 
$$J_1=
\begin{pmatrix}
1 && 1 && 0 \\
0 && 1 && 0 \\
0 && 0 && 1 \\
\end{pmatrix},$$
$$J_2=
\begin{pmatrix}
-1 && 0 && 0 \\
0 && 1 && 1 \\
0 && 0 && 1 \\
\end{pmatrix},$$ 
$$J_3=
\begin{pmatrix}
1 && 0 && 0 \\
0 && 1 && 1 \\
0 && 0 && 1 \\
\end{pmatrix},$$
$$J_4=
\begin{pmatrix}
1 && 0 && 0 \\
0 && \frac{1}{2}(1-\sqrt{5}) && 0 \\
0 && 0 && \frac{1}{2}(1+\sqrt{5}) \\
\end{pmatrix},$$
$$J_5=
D(roots(-1-t^2+t^3)),$$
and
$$J_6=
D(roots(-1-t+t^3)).$$
where $D(roots(-1-t^2+t^3))$ corresponds to a three-by-three square matrix with diagonal entries defined by the roots of $-1-t^2+t^3=0$; similarly for $D(roots(-1-t+t^3)).$

It can be shown that 
$$(J_1)^{k}=
\begin{pmatrix}
1 && k && 0 \\
0 && 1 && 0 \\
0 && 0 && 1 \\
\end{pmatrix}$$
$$(J_2)^{k}=
\begin{pmatrix}
(-1)^k && 0 && 0 \\
0 && 1 && k \\
0 && 0 && 1 \\
\end{pmatrix},$$ 
$$(J_3)^{k}=
\begin{pmatrix}
1 && 0 && 0 \\
0 && 1 && k \\
0 && 0 && 1 \\
\end{pmatrix},$$
$$(J_4)^{k}=
\begin{pmatrix}
1 && 0 && 0 \\
0 && (\frac{1}{2}(1-\sqrt{5}))^k && 0 \\
0 && 0 && (\frac{1}{2}(1+\sqrt{5}))^k \\
\end{pmatrix},$$
$$(J_5)^k=
D(roots(-1-t^2+t^3)^k),$$
and
$$(J_6)^k=
D(roots(-1-t+t^3)^k).$$

It is well-known that the diagonal elements of each $J$ are precisely the eigenvalues of the matrix $A$ from which it originated; further, $J_4$ through $J_6$ are diagonal and each contain at least one entry of magnitude greater than 1 on their respective diagonals. From this, it is clear that if $(F_1)^{T}(\sigma, \tau_0,\tau_1)$ (and hence $F_1(\sigma, \tau_0,\tau_1)$) has eigenvalues that all have magnitude 1, then the first column of $M_{\sigma, \tau_0,\tau_1}$ depends polynomially on $k,$ and hence the associated $T_{\sigma, \tau_0,\tau_1}$ has polynomial behavior. Otherwise, the first column of $M_{\sigma, \tau_0,\tau_1}$ depends exponentially on $k,$ and hence the associated $T_{\sigma, \tau_0,\tau_1}$ has non-polynomial behavior.

Assume $T_{\sigma, \tau_0,\tau_1}$ has polynomial behavior in $k.$ Then the first column of $M_{\sigma, \tau_0,\tau_1}$ must depend strictly polynomially on $k.$ From the above decomposition of $M_{\sigma, \tau_0,\tau_1},$ we see that the only place where $k$-dependance may enter the first column of $M_{\sigma, \tau_0,\tau_1}$ is through $A$; hence, by the above argument the eigenvalues of $F_1(\sigma, \tau_0,\tau_1)$ all have magnitude 1.  Now assume the eigenvalues of $F_1(\sigma, \tau_0,\tau_1)$ all have magnitude 1. Running above argument in reverse, we see that $T_{\sigma, \tau_0,\tau_1}$ must necessarily have polynomial behavior.
\end{proof}

\begin{corr}

A triangle partition map $T_{\sigma, \tau_0,\tau_1}$ is non-polynomial behavior if and only if the associated $M_{\sigma, \tau_0,\tau_1}$ depends exponentially on $k.$

\end{corr}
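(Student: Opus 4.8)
The plan is to read this off from the proof of Theorem~\ref{theorem:poly} rather than start afresh, since essentially all of the needed structure is already in place. Recall the identity derived there, namely $M_{\sigma,\tau_0,\tau_1} = (V^T)^{-1} F_0^T (F_1^T)^k V^T$, which shows that the entire $k$-dependence of $M_{\sigma,\tau_0,\tau_1}$ is funneled through the single factor $A^k$, where $A = (F_1(\sigma,\tau_0,\tau_1))^T$. The proof also records that, across the $36$ distinct matrices $A$ that arise, only six Jordan forms $J_1,\dots,J_6$ occur, that $J_1^k$, $J_2^k$, $J_3^k$ have entries drawn from $\{0,1,k,(-1)^k\}$ (polynomial in $k$, with a possible $(-1)^k$), and that $J_4^k$, $J_5^k$, $J_6^k$ are diagonal with at least one diagonal entry of modulus $r^k$ for some real $r>1$ (genuinely exponential in $k$). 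These two regimes are mutually exclusive and jointly exhaustive, which is the crux of the dichotomy.

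For the direction ``non-polynomial $\Rightarrow$ exponential'': if $T_{\sigma,\tau_0,\tau_1}$ is non-polynomial-behavior, then by Theorem~\ref{theorem:poly} the matrix $F_1(\sigma,\tau_0,\tau_1)$ has an eigenvalue of modulus $>1$, hence so does $A = (F_1)^T$, so $A$ is conjugate to one of $J_4, J_5, J_6$. Thus $A^k = P J_i^k P^{-1}$ for a fixed invertible $P$ and some $i \in \{4,5,6\}$, and since $J_i^k$ has an entry of modulus $r^k$ with $r>1$, the operator norm $\|A^k\|$ grows at least like $c\,r^k$ for some $c>0$. Finally, from $M_{\sigma,\tau_0,\tau_1} = (V^T)^{-1} F_0^T A^k V^T$ we get $A^k = (F_0^T)^{-1} V^T M_{\sigma,\tau_0,\tau_1} (V^T)^{-1}$, so $\|A^k\| \le \|(F_0^T)^{-1}\|\,\|V^T\|\,\|(V^T)^{-1}\|\,\|M_{\sigma,\tau_0,\tau_1}\|$; since the three leading factors are fixed constants, $\|M_{\sigma,\tau_0,\tau_1}\|$ must itself grow exponentially in $k$.

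For the converse, I would argue by contrapositive: if $T_{\sigma,\tau_0,\tau_1}$ is polynomial-behavior, then by Theorem~\ref{theorem:poly} every eigenvalue of $F_1(\sigma,\tau_0,\tau_1)$, and hence of $A$, has modulus $1$, so $A$ is conjugate to one of $J_1, J_2, J_3$; then $A^k = P J_i^k P^{-1}$ has entries that are fixed linear combinations of $1$, $k$, and $(-1)^k$, whence every entry of $M_{\sigma,\tau_0,\tau_1} = (V^T)^{-1} F_0^T A^k V^T$ is likewise a fixed linear combination of $1$, $k$, $(-1)^k$. Such entries are bounded by a polynomial in $k$, so $M_{\sigma,\tau_0,\tau_1}$ does not depend exponentially on $k$. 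Combining the two directions gives the corollary.

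The main point requiring care — and really the only substantive thing to check — is the implicit claim that ``depends polynomially on $k$ (possibly with a $(-1)^k$)'' and ``depends exponentially on $k$'' are the only two possibilities, with nothing in between; this is exactly what the finite enumeration of the six Jordan forms $J_1,\dots,J_6$ in the proof of Theorem~\ref{theorem:poly} guarantees, since every modulus-$1$ eigenvalue appearing there is $\pm 1$ (contributing only bounded, polynomial-type behavior) while every remaining eigenvalue has modulus strictly greater than $1$. A secondary point is verifying that pre- and post-multiplication of $A^k$ by the fixed invertible matrices $(V^T)^{-1}F_0^T$ and $V^T$ cannot cancel exponential growth, which the norm inequality above settles cleanly.
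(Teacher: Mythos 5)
Your proposal is correct and follows exactly the route the paper intends: the paper states this corollary without proof, treating it as immediate from the decomposition $M_{\sigma,\tau_0,\tau_1}=(V^T)^{-1}F_0^T(F_1^T)^kV^T$ and the six Jordan forms in the proof of Theorem~\ref{theorem:poly}, which is precisely the structure you exploit. Your added norm argument showing that conjugation by the fixed invertible matrices cannot cancel the exponential growth of $A^k$ is a worthwhile detail the paper leaves implicit.
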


\begin{corr}
A triangle partition map $T_{\sigma, \tau_0,\tau_1}$ has polynomial behavior if and only if the associated $F_1(\sigma, \tau_0,\tau_1)=\sigma F_1 \tau_1$ is not diagonalizable.
\end{corr}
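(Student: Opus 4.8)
The plan is to deduce this Corollary directly from Theorem~\ref{theorem:poly}, which already gives the characterization ``polynomial behavior $\iff$ all eigenvalues of $F_1(\sigma,\tau_0,\tau_1)$ have magnitude $1$''. So it suffices to show that for the specific matrices $F_1(\sigma,\tau_0,\tau_1)=\sigma F_1\tau_1$ arising here, the condition ``all eigenvalues have magnitude $1$'' is equivalent to ``$\sigma F_1\tau_1$ is not diagonalizable''. This is a statement purely about the finite list of $36$ matrices $\sigma F_1\tau_1$ (only $\tau_1$ and $\sigma$ enter, so at most $36$ distinct matrices, and by the proof of Theorem~\ref{theorem:poly} only six distinct Jordan forms $J_1,\dots,J_6$ occur).

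First I would recall from the proof of Theorem~\ref{theorem:poly} that the Jordan form of $A=F_1(\sigma,\tau_0,\tau_1)^T$ (equivalently of $F_1(\sigma,\tau_0,\tau_1)$, since a matrix and its transpose have the same Jordan form) is always one of $J_1,\dots,J_6$. Then I would examine each case. The forms $J_1$, $J_2$, $J_3$ each have all eigenvalues of magnitude $1$ (eigenvalues $1$, or $-1$ and $1$) and each contains a nontrivial $2\times 2$ Jordan block, hence is \emph{not} diagonalizable. The forms $J_4$, $J_5$, $J_6$ are diagonal (hence diagonalizable) and each has at least one eigenvalue of magnitude strictly greater than $1$: for $J_4$ the entry $\tfrac12(1+\sqrt5)>1$; for $J_5$ the cubic $t^3-t^2-1$ has a real root $>1$ (check $t=1$ gives $-1<0$ and $t\to\infty$ gives $+\infty$); for $J_6$ the cubic $t^3-t-1$ likewise has a real root $>1$ (at $t=1$ it is $-1<0$). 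Thus across all six cases, ``all eigenvalues of magnitude $1$'' holds exactly in the non-diagonalizable cases $J_1,J_2,J_3$, and fails exactly in the diagonalizable cases $J_4,J_5,J_6$.

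Combining: $T_{\sigma,\tau_0,\tau_1}$ has polynomial behavior $\iff$ (by Theorem~\ref{theorem:poly}) all eigenvalues of $F_1(\sigma,\tau_0,\tau_1)$ have magnitude $1$ $\iff$ (by the case analysis) the Jordan form of $F_1(\sigma,\tau_0,\tau_1)$ is $J_1$, $J_2$, or $J_3$ $\iff$ $F_1(\sigma,\tau_0,\tau_1)$ is not diagonalizable. I would remark in passing that the equivalence is really a coincidence special to this family: in general a matrix can have all eigenvalues on the unit circle yet be diagonalizable, but among the matrices $\sigma F_1\tau_1$ this never happens, because whenever the spectrum lies on the unit circle there is a repeated eigenvalue $1$ coming with a genuine Jordan block.

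The main obstacle is simply making sure the case analysis is complete and correct — in particular verifying that the six listed Jordan forms genuinely exhaust all $36$ possibilities (this is asserted in the proof of Theorem~\ref{theorem:poly} and may be taken as given) and confirming the sign/magnitude claims for the roots of the two cubics. There is no deep difficulty; the content is entirely in the already-established Theorem, and this Corollary is a clean repackaging of its proof.
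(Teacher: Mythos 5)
Your proposal is correct and follows exactly the route the paper takes: the paper's proof is the one-line remark that the corollary ``follows immediately from the form of $J_1$ through $J_6$ in Theorem~\ref{theorem:poly},'' and your case analysis (that $J_1,J_2,J_3$ are the non-diagonalizable forms with unimodular spectrum while $J_4,J_5,J_6$ are diagonal with an eigenvalue of magnitude greater than $1$) is precisely the verification the paper leaves implicit. Your additional observations --- that $F_1$ and $F_1^T$ share a Jordan form, and that the root-location checks for the two cubics go through --- are correct details the paper omits, but they do not constitute a different approach.
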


\begin{proof}
This follows immediately from the form of $J_1$ through $J_6$ in Theorem \ref{theorem:poly}.

\end{proof}

\begin{corr} 
Let $\sigma, \tau_0, \tau_1, \rho$ and $\gamma$ be three-by-three permutation matrices. If a triangle partition map $T_{\sigma, \tau_0,\tau_1}$ has polynomial behavior, then so does  the triangle partition map $T_{\rho\sigma, \gamma,\tau_1\rho^{-1}}$; similarly, if $T_{\sigma, \tau_0,\tau_1}$ have non-polynomial behavior, then so does $T_{\rho\sigma, \gamma,\tau_1\rho^{-1}}$.
\end{corr}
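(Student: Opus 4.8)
The plan is to reduce the statement to a criterion purely about the eigenvalues of $F_1$-type matrices, using Theorem~\ref{theorem:poly}. By that theorem, $T_{\sigma,\tau_0,\tau_1}$ has polynomial behavior precisely when every eigenvalue of $F_1(\sigma,\tau_0,\tau_1)=\sigma F_1\tau_1$ has magnitude $1$, and non-polynomial behavior otherwise. So I must show that the eigenvalues of $F_1(\rho\sigma,\gamma,\tau_1\rho^{-1})=(\rho\sigma)F_1(\tau_1\rho^{-1})$ all have magnitude $1$ if and only if the eigenvalues of $\sigma F_1\tau_1$ all have magnitude $1$. The key observation is the conjugacy
$$
(\rho\sigma)F_1(\tau_1\rho^{-1}) \;=\; \rho\,(\sigma F_1\tau_1)\,\rho^{-1},
$$
which holds because $\rho$ is a permutation matrix, hence invertible, and the right-hand side is manifestly $\rho$ conjugating $\sigma F_1\tau_1$.

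From there the argument is immediate: similar matrices have the same characteristic polynomial, hence the same multiset of eigenvalues, hence the same set of eigenvalue magnitudes. So $F_1(\rho\sigma,\gamma,\tau_1\rho^{-1})$ and $F_1(\sigma,\tau_0,\tau_1)$ have identical eigenvalues; in particular one has all eigenvalues of magnitude $1$ iff the other does. Note that $\tau_0$ and $\gamma$ play no role whatsoever — this matches Corollary~\ref{theorem:poly}'s companion results (and the earlier corollary asserting polynomial behavior is equivalent to non-diagonalizability of $F_1$), which already show the polynomial/non-polynomial dichotomy depends only on $F_1(\sigma,\tau_0,\tau_1)=\sigma F_1\tau_1$ and not at all on $\tau_0$. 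Applying Theorem~\ref{theorem:poly} in both directions then gives: $T_{\sigma,\tau_0,\tau_1}$ polynomial-behavior $\iff$ $T_{\rho\sigma,\gamma,\tau_1\rho^{-1}}$ polynomial-behavior, and the same equivalence for non-polynomial behavior (since non-polynomial is simply the negation).

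There is essentially no obstacle here; the only thing to be slightly careful about is bookkeeping with the permutation-matrix indexing convention. The map $(\sigma,\tau_0,\tau_1)\mapsto \sigma F_1\tau_1$ is what matters, and I should double-check that $\rho^{-1}$ (rather than $\rho$) is the correct factor on the right so that the conjugation $\rho(\sigma F_1\tau_1)\rho^{-1}$ comes out exactly as written in the statement — which it does, since the new map's $F_1$-matrix is $(\rho\sigma)F_1(\tau_1\rho^{-1})=\rho\sigma F_1\tau_1\rho^{-1}$. I would write the proof in three lines: (i) cite Theorem~\ref{theorem:poly} to pass from behavior to eigenvalue magnitudes of the $F_1$-matrix; (ii) exhibit the conjugation identity above and conclude equality of eigenvalue multisets; (iii) cite Theorem~\ref{theorem:poly} again to pass back. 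No computation of specific matrices is needed.
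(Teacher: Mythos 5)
Your proposal is correct and matches the paper's own proof: both rest on the conjugation identity $F_1(\rho\sigma,\gamma,\tau_1\rho^{-1})=\rho\,F_1(\sigma,\tau_0,\tau_1)\,\rho^{-1}$ and then invoke the eigenvalue (equivalently, Jordan form) criterion of Theorem~\ref{theorem:poly}. No gaps.
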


\begin{proof}
We need to compare $F_1 (\sigma, \tau_0,\tau_1)   $  and $F_1(\rho\sigma, \gamma,\tau_1\rho^{-1})$.
We have 
$$F_1(\rho\sigma, \gamma,\tau_1\rho^{-1}) = \rho\sigma F_1 \tau_1\rho^{-1}=\rho F_1(\sigma, \tau_0,\tau_1) \rho^{-1}.$$

Since conjugation will not change the Jordan canonical form of the relevant matrices, we are done.

\end{proof}

Note that transfer operators with polynomial behavior are basically traditional zeta functions (though not dynamical zeta functions).  In fact, we were initially tempted to call such transfer operators ``zeta''-like. 

Finally, what determines if a TRIP map $(\sigma, \tau_0, \tau_1)$ has polynomial behavior depends only on $\tau_1$.  By calculation, the 108 polynomial behavior TRIP maps are 
$$(\sigma, \tau_0, e), (\sigma, \tau_0, 12), (\sigma, \tau_0, 23).$$

\section{On Polynomial Behavior in Combination Triangle Partition Maps}
\label{originofpolycombo}

We are concerned with the form of the transfer operator $\mathcal{L}_{T}$ where 
$$\mathcal{L}_{T}f(x,y) = \sum_{(a,b):T(a,b)=(x,y)} \frac{1}{\left|\mbox{Jac}(T(a,b))\right|} f(a,b).$$
and $T$ is a finite composition of $n$ triangle partition maps defined by 
$$T=T_1 \circ T_2 \circ \dots \circ T_i \circ \dots \circ T_n$$
where each $T_i$ is a TRIP map for   a permutation triplet $(\sigma_i, (\tau_0)_i, (\tau_1)_i)$.

If the denominator of $\frac{1}{\mbox{Jac}(T(a,b))}$ is (non-trivially) polynomial in $k_i$ (also allowing for factors of $(-1)^{k_i}$) then $T$ gives rise to a transfer operator that has polynomial behavior in $k_i,$ and is itself polynomial behavior in $k_i;$ otherwise, $T$ gives rise to a transfer operator that has  non-polynomial behavior in $k_i,$ and has itself non-polynomial behavior. Here we state and prove a theorem regarding the polynomial behavior of combo triangle partition maps, and present some corollaries. To avoid redundancy, we will refer to polynomial dependence on $k_i$ aside from factors of $(-1)^{k_i}$ as polynomial dependence on $k_i$ or strictly polynomial dependence on $k_i.$

\begin{theorem} A combination triangle partition map $T=T_1 \circ T_2 \circ \dots \circ T_i \circ \dots \circ T_n$ has polynomial behavior in $k_i$ if and only if the eigenvalues of the associated $F_{1_i}$ (the $F_1$ matrix corresponding to $T_i$) all have magnitude 1; it has non-polynomial behavior in $k_i$ otherwise. \end{theorem}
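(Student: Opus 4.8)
The plan is to reduce the composite case to the single-map analysis of Theorem~\ref{theorem:poly} by tracking precisely where the parameter $k_i$ enters a matrix representation of $T^{-1}$. First I would record that maps of the form $\phi_M(x,y)=\pi((1,x,y)M)$ compose as $\phi_A\circ\phi_B=\phi_{BA}$, since $\pi$ is invariant under rescaling and hence only the matrix product survives. By the proof of Theorem~\ref{theorem:poly} each inverse $T_j^{-1}$ equals $\phi_{M_j}$ with $M_j=(V^T)^{-1}F_{0_j}^{T}(F_{1_j}^{T})^{k_j}V^T$, so
$$T^{-1}=T_n^{-1}\circ\cdots\circ T_1^{-1}=\phi_{M},\qquad M:=M_1M_2\cdots M_n.$$
Thus $T^{-1}(x,y)=\pi((1,x,y)M)$, and by the Lemma, with $(a,b)=T^{-1}(x,y)$, one has $\frac{1}{|\mbox{Jac}(T(a,b))|}=\frac{|\det M|}{\big|(1,x,y)M(1,0,0)^T\big|^{3}}$.

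Next I would isolate the $k_i$-dependence inside $M$. In the product $M_1\cdots M_n$ the only factor containing $k_i$ is $(F_{1_i}^{T})^{k_i}$, sitting in the middle of $M_i$, so $M=L\,(F_{1_i}^{T})^{k_i}\,R$ with $L$ and $R$ independent of $k_i$. Writing $F_{1_i}^{T}=PJP^{-1}$ in Jordan form, the classification established in the proof of Theorem~\ref{theorem:poly} applies unchanged: $J$ is one of $J_1,\dots,J_6$; when all eigenvalues of $F_{1_i}$ have magnitude $1$ then $J\in\{J_1,J_2,J_3\}$ and $J^{k_i}$ (hence $M$, hence each column of $M$) depends polynomially on $k_i$ apart from a possible $(-1)^{k_i}$; otherwise $J\in\{J_4,J_5,J_6\}$ is diagonal with an entry of magnitude exceeding $1$ and $J^{k_i}$ has exponentially growing entries.

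Then I would read the conclusion off the Jacobian formula. The numerator $|\det M|=\prod_j|\det M_j|=1$ is free of $k_i$, so the $k_i$-behavior of $\mbox{Jac}(T)$ is exactly that of $(1,x,y)\cdot C_1$, where $C_1$ is the first column of $M$. If the eigenvalues of $F_{1_i}$ all have magnitude $1$, this is polynomial in $k_i$ (up to $(-1)^{k_i}$), giving polynomial behavior. For the converse I would invoke the boundedness argument of Theorem~\ref{theorem:poly}, now applied to $M=M_1\cdots M_n$: since $T^{-1}(x,y)=\big(\frac{(1,x,y)\cdot C_2}{(1,x,y)\cdot C_1},\frac{(1,x,y)\cdot C_3}{(1,x,y)\cdot C_1}\big)$ must land in the bounded region $\triangle$ and must genuinely depend on $k_i$ (it is $k_i$ that records which subtriangle $T_i$ refines into), an exponentially growing $M$ forces $C_1$ itself to grow exponentially, hence non-polynomial behavior.

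The main obstacle is precisely this last point: ruling out that the fixed matrices $L,R$, together with the restriction to the first column, accidentally annihilate the exponentially growing part of $(F_{1_i}^{T})^{k_i}$ and leave $C_1$ polynomial in $k_i$. As in Theorem~\ref{theorem:poly} this is resolved by the boundedness and genuine $k_i$-dependence of $T^{-1}$ on $\triangle$, but one should check that pre- and post-composing with the other TRIP maps $T_j$ ($j\ne i$) does not disturb that reasoning; it does not, since each $T_j$ is a bijection onto $\triangle$ and contributes only $k_i$-free factors. Everything else is routine matrix bookkeeping, identical in spirit to the single-map proof.
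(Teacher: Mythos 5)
Your proposal is correct and follows essentially the same route as the paper's proof: reduce to the product $M=M_1\cdots M_n$, observe that $k_i$ enters only through $(F_{1_i}^{T})^{k_i}$, and then rerun the Jordan-form classification and the boundedness/genuine-dependence argument from Theorem~\ref{theorem:poly}. The paper states this more tersely (``the argument will mirror the proof of theorem \ref{theorem:poly}''), while you make explicit the one point it glosses over, namely that the $k_i$-free factors $L,R$ cannot cancel the exponential growth; that is a welcome clarification but not a different method.
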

\begin{proof}
We have already shown explicitly that $\frac{1}{\mbox{Jac}(T(a,b))}=\frac{det\left(M\right)}{((1,x,y)M(1,0,0)^{T})^{3}}$ where $$M=M_1 M_2 \dots M_i \dots M_n$$
and
$$M_i=((V F_{0}^{-1} (\sigma_i, (\tau_0)_i, (\tau_1)_i)F_{1_i}^{-k}(\sigma_i, (\tau_0)_i, (\tau_1)_i)V^{-1})^{T})^{-1}.$$  Since $det\left(M\right)$ is real and has magnitude $\pm 1$, this implies that $T$ has polynomial behavior in $k_i$ if and only if the first column of $M$ contains terms polynomial in $k_i$.
But now the argument will mirror the proof of theorem \ref{theorem:poly}, since what matters is the nature of each $F_{1_i}^{-k}(\sigma_i, (\tau_0)_i, (\tau_1)_i)$, right down to there only being six unique Jordan canonical forms.

\end{proof}

We have the following corollaries, whose proofs mimic the corollaries of theorem \ref{theorem:poly}.

\begin{corr}

A combination triangle partition map $T$ as defined in the above theorem is non-polynomial-behavior in $k_i$ if and only if the associated $M$ depends exponentially on $k_i.$

\end{corr}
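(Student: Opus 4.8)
The claim to prove is the corollary: a combination triangle partition map $T = T_1 \circ \cdots \circ T_n$ is non-polynomial-behavior in $k_i$ if and only if the associated matrix $M = M_1 \cdots M_n$ depends exponentially on $k_i$. I would treat this as the contrapositive/complementary statement of the theorem just proved, exactly mirroring the corollary of Theorem \ref{theorem:poly}. The strategy is to recall that from the proof of the preceding theorem we already have the factorization $M = (V^T)^{-1} F_0^T (F_{1_1}^T)^{k_1} F_0^T (F_{1_2}^T)^{k_2} \cdots F_0^T (F_{1_n}^T)^{k_n} V^T$ (after clearing the nested inverses), so that the only $k_i$-dependence in $M$ enters through the single factor $(F_{1_i}^T)^{k_i}$.

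\textbf{Key steps, in order.} First, I would isolate the $k_i$-dependent factor: writing $A_i = (F_{1_i})^T$ and taking its Jordan decomposition $A_i = P J P^{-1}$, we have $(A_i)^{k_i} = P J^{k_i} P^{-1}$, and by the theorem's enumeration $J$ is one of the six canonical forms $J_1, \ldots, J_6$. Second, I would invoke the already-established dichotomy: $J_1^{k_i}, J_2^{k_i}, J_3^{k_i}$ have entries that are polynomial in $k_i$ (up to a $(-1)^{k_i}$ factor), while $J_4^{k_i}, J_5^{k_i}, J_6^{k_i}$ have entries that are genuinely exponential in $k_i$, since those $J$'s are diagonal with a diagonal entry of magnitude strictly greater than $1$. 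Third, I would observe that since all the other matrices in the product for $M$ are constant (independent of $k_i$) and invertible, $M$ depends polynomially on $k_i$ precisely when $(A_i)^{k_i}$ does, and exponentially precisely when $(A_i)^{k_i}$ does — there is no cancellation that could downgrade exponential growth to polynomial, because the surrounding constant invertible matrices cannot annihilate the exponentially growing Jordan block. Finally, combining this with the theorem's statement that $T$ is non-polynomial-behavior in $k_i$ iff the eigenvalues of $F_{1_i}$ are not all of magnitude $1$ iff $J \in \{J_4, J_5, J_6\}$ iff $(A_i)^{k_i}$ (hence $M$) is exponential in $k_i$ gives the biconditional.

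\textbf{Main obstacle.} The only non-cosmetic point is the "no cancellation" step: one must argue that multiplying the exponentially-growing factor $(A_i)^{k_i}$ on both sides by \emph{fixed} invertible matrices cannot produce a product whose entries are all polynomial in $k_i$. This follows because if $N_L (A_i)^{k_i} N_R$ were polynomial in $k_i$ for invertible constant $N_L, N_R$, then $(A_i)^{k_i} = N_L^{-1} (N_L (A_i)^{k_i} N_R) N_R^{-1}$ would also be polynomial in $k_i$, contradicting the exponential growth of the norm $\|(A_i)^{k_i}\| \to \infty$ faster than any polynomial when $A_i$ has a spectral radius exceeding $1$. Everything else is a direct restatement of the preceding theorem and mirrors the proof of its corollary, so I would keep the write-up to a few lines, citing the theorem and the explicit forms of $J_1, \ldots, J_6$ and their powers.
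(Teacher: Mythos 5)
Your proposal is correct and follows essentially the same route as the paper, which disposes of this corollary by noting that its proof mimics the corresponding corollary of Theorem \ref{theorem:poly}: the telescoped factorization of $M$ isolates the $k_i$-dependence in the single factor $(F_{1_i}^T)^{k_i}$, and the six Jordan forms $J_1,\dots,J_6$ split into the polynomial and exponential cases. Your explicit ``no cancellation'' argument (conjugating by fixed invertible matrices cannot reduce exponential growth to polynomial) is a detail the paper leaves implicit, and it is a worthwhile addition.
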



\begin{corr}
A combination triangle partition map $T$ as defined in the above theorem is polynomial-behavior in $k_i$ if and only if the associated $F_{1_i}$ is not diagonalizable; it is non-polynomial-behavior otherwise.
\end{corr}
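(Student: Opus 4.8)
The final statement to prove is the corollary characterizing polynomial behavior of combination triangle partition maps in terms of diagonalizability of $F_{1_i}$. The plan is to reduce this to the theorem immediately preceding it, exactly as the single-map corollary reduced to Theorem~\ref{theorem:poly}.

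First I would invoke the theorem just proved: $T = T_1 \circ \cdots \circ T_n$ has polynomial behavior in $k_i$ if and only if every eigenvalue of $F_{1_i} = \sigma_i F_1 (\tau_1)_i$ has magnitude $1$. So it suffices to show that, among the $36$ matrices of the form $\sigma F_1 \tau_1$ that can occur as some $F_{1_i}$, the ones whose eigenvalues all have modulus $1$ are precisely the non-diagonalizable ones. This is a finite check, but the structural reason is already visible in the list of the six Jordan forms $J_1,\dots,J_6$ appearing in the proof of Theorem~\ref{theorem:poly}: the three forms $J_1, J_2, J_3$ (the polynomial-behavior cases, all eigenvalues of modulus $1$) each have a nontrivial Jordan block, hence the corresponding $F_{1_i}$ is not diagonalizable; the three forms $J_4, J_5, J_6$ (the non-polynomial cases) are already diagonal, with at least one diagonal entry of modulus strictly greater than $1$, hence the corresponding $F_{1_i}$ is diagonalizable. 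I would spell this out and then conclude: $F_{1_i}$ not diagonalizable $\iff$ its Jordan form is one of $J_1, J_2, J_3$ $\iff$ all its eigenvalues have modulus $1$ $\iff$ $T$ has polynomial behavior in $k_i$.

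The one point that needs a word of care is the logical equivalence ``all eigenvalues of modulus $1$ $\iff$ not diagonalizable'' — in general these are unrelated, and the equivalence holds here only because of the very restricted list of matrices $\sigma F_1 \tau_1$ that actually arise. So I would emphasize that the claim is not a general linear-algebra fact but a consequence of the enumeration carried out in Theorem~\ref{theorem:poly}: every $\sigma F_1 \tau_1$ whose spectrum lies on the unit circle in fact has a repeated eigenvalue together with a nontrivial nilpotent part, while every $\sigma F_1 \tau_1$ with an eigenvalue off the unit circle turns out to have three distinct eigenvalues. Thus the main (and only) obstacle is bookkeeping: confirming that the six Jordan forms listed really do exhaust all cases and that they split as claimed, which is already asserted in the proof of Theorem~\ref{theorem:poly} and verified in Appendix~\ref{app:formoftransfer}. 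Given that, the corollary is immediate.

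Concretely, the proof would read: \emph{By the preceding theorem, $T$ has polynomial behavior in $k_i$ iff all eigenvalues of $F_{1_i}$ have magnitude $1$. Passing to $A = F_{1_i}^T$, the Jordan form of $A$ is one of $J_1, \dots, J_6$ from the proof of Theorem~\ref{theorem:poly}. The forms $J_1, J_2, J_3$ are exactly those all of whose diagonal entries (equivalently, all of whose eigenvalues of $F_{1_i}$) have magnitude $1$, and each contains a nontrivial Jordan block, so $A$ — and hence $F_{1_i}$ — fails to be diagonalizable. The forms $J_4, J_5, J_6$ are diagonal with a diagonal entry of magnitude $>1$, so $F_{1_i}$ is diagonalizable and has an eigenvalue of magnitude $>1$. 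Hence $F_{1_i}$ is non-diagonalizable iff all its eigenvalues have magnitude $1$ iff $T$ has polynomial behavior in $k_i$; the remaining cases are non-polynomial behavior.*
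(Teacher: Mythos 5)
Your proposal is correct and follows essentially the same route as the paper: the paper states that this corollary's proof mimics the single-map version, which in turn ``follows immediately from the form of $J_1$ through $J_6$'' in Theorem \ref{theorem:poly}, exactly the reduction you carry out. Your explicit warning that ``all eigenvalues of modulus $1$ $\iff$ not diagonalizable'' is an artifact of the finite enumeration of Jordan forms rather than a general fact is a worthwhile clarification the paper leaves implicit (though note the exhaustiveness of $J_1,\dots,J_6$ is asserted in the proof of Theorem \ref{theorem:poly} itself, not in Appendix \ref{app:formoftransfer}).
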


\section{Conclusion}
While we find the distinction between polynomial versus non-polynomial behavior interesting, the real goal of this paper is to set-up the machinery for our paper ``Functional Analysis behind a Family of Multidimensional Continued Fractions: Part II.''  In this next paper we will find
 eigenfunctions of eigenvalue 1 for transfer operators associated with 18  polynomial-behavior triangle partition maps  The formidably complex form of the non-polynomial-behavior transfer operators and lack of appropriate techniques makes finding leading eigenfunctions prohibitively difficult; however, finding leading eigenfunctions for the remaining 90 polynomial-behavior transfer operators appears a doable, though computationally-intensive, problem. We will then show  the nuclearity of transfer operators, thought of as acting on one-dimensional families of Hilbert spaces, associated with 36 polynomial-behavior maps. 
 
 In the conclusion of the next paper, we further discuss many of natural and important questions that remain open.

\clearpage

\end{document}